\def\udcs{517.956} 
\newtheorem{lemma}{Лемма}
\newtheorem{theorem}{Теорема}
\begin{document}
УДК \udcs \thispagestyle{empty}

\title[Третий потенциал двойного слоя для уравнения Гельмгольца \dots ] {Третий потенциал двойного слоя для обобщенного двуосесимметрического уравнения Гельмгольца }

\author{ Т.Г.ЭРГАШЕВ}
\address{Тухтасин Гуламжанович Эргашев, 
\newline\hphantom{iii} Tuhtasin Gulamjanovich Ergashev,
\newline\hphantom{iii} Ташкентский институт инженеров ирригации и механизации сельского хозяйства,
\newline\hphantom{iii} ул.Кари-Ниязи, 39, 
\newline\hphantom{iii} 100000, г. Ташкент, Узбекистан}
\email{ertuhtasin@mail.ru}

\thanks{\sc Ergashev T.G.
Third Double-Layer Potential for a Generalized Bi-Axially
Symmetric
 Helmholtz Equation}
\thanks{\copyright \ 2017 Эргашев Т.Г.}

\thanks{\it Поступила 1 августа  2017 г.}

\maketitle { \small
\begin{quote}
\noindent{\bf Аннотация. } Потенциал двойного слоя играет важную
роль при решении краевых задач для эллиптических уравнений, при
исследовании которого существенно используются свойства
фундаментальных решений данного уравнения. В настоящее время все
фундаментальные решения обобщенного двуосесимметрического
уравнения Гельмгольца известны, но, несмотря на это, только для
первого из них построена теория потенциала. В данной  работе
исследуется потенциал двойного слоя, соответствующий третьему
фундаментальному решению. Используя свойства
ги\-пер\-гео\-мет\-ри\-чес\-кой функции Аппеля от двух переменных,
до\-ка\-зы\-ва\-ются предельные теоремы и выводятся интегральные
урав\-не\-ния, содержащие в ядре плотность потенциала двойного
слоя.
\medskip

\noindent{\bf Abstract. } {The double-layer potential plays an
important role in solving boundary value problems for elliptic
equations, and in the study of which for a certain
equation, the properties of the fundamental solutions of the given
equation are used. All the fundamental solutions of the
generalized bi-axially symmetric Helmholtz equation were known,
and only for the first one was  constructed the theory of
potential. Here, in this paper, we aim at constructing theory of
double-layer potentials corresponding to the third fundamental
solution. By using some properties of one of Appell's
hypergeometric functions in two variables, we prove limiting
theorems and derive integral equations concerning a denseness of
double-layer potentials.}
\medskip

\noindent{\bf Ключевые слова:}{\,\,обобщенное двуосесимметрическое
уравнение Гельмгольца; формула Грина; фундаментальное решение;
третий потенциал двойного слоя; гипергеометрические функции Аппеля
от двух переменных; интегральные уравнения с плотностью потенциала
двойного слоя в ядре.}
\medskip

 \noindent{\bf Keywords:}{\,\,Generalized bi-axially symmetric Helmholtz equation; Green's formula;\,\,fundamental solution; third double-layer potential;\,\,Appell's hypergeometric functions in two variables;\,\,integral equations concerning a denseness of
double-layer potential.}
\end{quote}
}

\section{Введение}

Многочисленные приложения теории потенциала можно найти в механике жидкости,
эластодинамике, электромагнетизме и акустике. С помощью этой теории
краевые задачи удаётся свести к решению интегральных
уравнений.

Потенциал двойного слоя играет важную роль при решении краевых
задач для эллиптических уравнений. Потому что, метод разделения
переменных и метод функции Грина позволяют получить явное
выражение для решения краевых задач только в случае областей
простейшего вида, а сведение краевых задач при помощи потенциала
двойного слоя к интегральным уравнениям, с одной стороны, удобно
для теоретического исследования вопроса о разрешимости и
единственности краевых задач, с другой стороны, дает возможность
эффективного численного решения краевых задач для областей сложной
формы [1,2].

Применяя метод комплексного анализа (основанный на аналитических
функциях), впервые Гильберт [3] построил интегральное
представление ре\-ше\-ний следующего обобщенного
двуосесимметрического уравнения Гельм\-гольца \frenchspacing
$$H^\lambda_{\alpha,\beta}(u)\equiv u_{xx}+u_{yy}+\frac{2\alpha}{x}u_x+\frac{2\beta}{y}u_y-\lambda^2u=0,\eqno (H^\lambda_{\alpha,\beta})$$
где $\alpha$, $\beta$  и $\lambda \, \--$  постоянные, причем
$0<2\alpha, 2\beta<1$.

Фундаментальные ре\-ше\-ния уравнения $(H^\lambda_{\alpha,\beta})$
найдены  в работе [4]. Когда $\lambda=0$, все четыре
фун\-да\-мен\-таль\-ные решения
$q_i(x,y;x_0,y_0)(i=\overline{1,4})$ уравнения
$H^0_{\alpha,\beta}(u)=0$ можно \,выразить \,с по\-мощью \,
гипергеометрической \,функции Ап\-пеля от двух переменных второго
рода $F_2\left(a,b_1,b_2;c_1,c_2;x,y\right)$, определенной по
формуле [5,6,7]
$$F_2\left(a,b_1,b_2;c_1,c_2;x,y\right)=\sum_{m,n=0}^\infty\frac{(a)_{m+n}(b_1)_m(b_2)_n}{(c_1)_m(c_2)_n m!n!}x^my^n ,$$
где $(a)_n$ ~---~ символ Похгаммера:
$(a)_0=1,(a)_n=a(a+1)(a+2)...(a+n-1),n=1,2,....$

К такому направлению исследований примыкает работа [8], в которой
построены фундаментальные решения  $B$-эллиптических уравнений с
млад\-ши\-ми членами вида
$$u_{xx}+u_{yy}+2\alpha u_x+\frac{2\beta}{y}u_y-\lambda^2u=0.
$$

 В работах [9] и [10] изложена теория потенциала для простейшего
вы\-рож\-даю\-ще\-гося эллиптического уравнения
$H^0_{\alpha,\beta}(u)=0$ при $\alpha=0$ и $\beta=0$,
со\-от\-вет\-ственно.

В [11]  построена  теория потенциала двойного слоя для уравнения
$(H^\lambda_{\alpha,\beta})$ при $\lambda=0$ в области
$$\Omega\subset
R^2_+\left\{(x,y):x>0,y>0\right\} $$ лишь для первого
фундаментального решения $q_1(x,y;x_0,y_0)$.

В настоящей работе мы исследуем потенциал двойного слоя,
соот\-вет\-ст\-вующий третьему фундаментальному решению
$$\,\,\,\,q_3(x,y;x_0,y_0)=\\$$ $$=k_3\left(r^2\right)^{-\alpha+\beta-1}y^{1-2\beta}y_0^{1-2\beta}F_2\left(1+\alpha-\beta;\alpha,1-\beta;2\alpha,2-2\beta; \xi,\eta \right), \eqno (1.1)$$
где
$$k_3=\frac{2^{2+2\alpha-2\beta}}{4\pi}\frac{\Gamma(\alpha)\Gamma(1-\beta)\Gamma(1+\alpha-\beta)}{\Gamma(2\alpha)\Gamma(2-2\beta)}, \eqno (1.2)$$
$$\left. \begin{matrix}
   r^2   \\
   r^2_1   \\
   r^2_2 \\
\end{matrix} \right\}=\left( \begin{matrix}
   x-x_0   \\
   x+x_0   \\
   x-x_0 \\
\end{matrix} \right)^2+\left( \begin{matrix}
   y-y_0   \\
   y-y_0   \\
   y+y_0\\
\end{matrix} \right)^2,\quad \quad  \xi=\frac{r^2-r^2_1}{r^2},\,\,\eta=\frac{r^2-r^2_2}{r^2}.\eqno(1.3)$$

Нетрудно проверить, что функция $q_3(x,y;x_0,y_0)$ обладает
следующими свойствами

$${{\left. \frac{\partial q_3(x,y;x_0,y_0)}{\partial x}
\right|}_{x=0}}=0, \eqno (1.4)$$ $${{\left. q_3(x,y;x_0,y_0)
\right|}_{y=0}}=0. $$

\section{ Формула Грина }

Рассмотрим тождество
$$x^{2\alpha}y^{2\beta}\left[uH^0_{\alpha,\beta}(v)-vH^0_{\alpha,\beta}(u)\right]=$$ $$=\frac{\partial}{\partial x}\left[x^{2\alpha}y^{2\beta}\left(v_xu-vu_x\right)\right]+\frac{\partial}{\partial y}\left[x^{2\alpha}y^{2\beta}\left(v_yu-vu_y\right)\right].$$
Интегрируя обе части последнего тождества по области $\Omega$ ,
расположенной в первой четверти $(x>0,y>0)$ и пользуясь формулой
Остроградского, получим

$$\iint\limits_\Omega
x^{2\alpha}y^{2\beta}\left[uH^0_{\alpha,\beta}(v)-vH^0_{\alpha,\beta}(u)\right]dxdy=$$
$$=\int\limits_S x^{2\alpha}y^{2\beta}u
\left(v_xdy-v_ydx\right)-x^{2\alpha}y^{2\beta}v
\left(u_xdy-u_ydx\right),\eqno (2.1)$$ где $S=\partial\Omega$
~---~ контур области  $\Omega$.

Формула Грина (2.1) выводится  при следующих предположениях:
функции $u(x,y),$ $v(x,y)$ и их частные производные первого
порядка непрерывны в замкнутой области $\overline{\Omega}$ ,
частные производные второго порядка непрерывны внутри $\Omega$  и
интегралы по $\Omega$, содержащие $H^0_{\alpha,\beta}(u)$ и
$H^0_{\alpha,\beta}(v)$, имеют смысл. Если $H^0_{\alpha,\beta}(u)$
и $H^0_{\alpha,\beta}(v)$ не обладают непрерывностью вплоть до
$S$, то это ~---~ несобственные интегралы, которые получаются как
пределы по любой после\-до\-вательности областей $\Omega_n$,
которые содержатся внутри $\Omega$, когда эти области $\Omega_n$
стремятся к $\Omega$, так что всякая точка, находящаяся внутри
$\Omega,$ попадает внутрь областей $\Omega_n, $ начиная с
некоторого номера $n$.

Если $u(x,y)$ и $v(x,y)$ суть решения уравнения
$H^0_{\alpha,\beta}(u)=0$, то из формулы (2.1) имеем
$$\int\limits_Sx^{2\alpha}y^{2\beta}\left(u\frac{\partial v}{\partial n}-v\frac{\partial u}{\partial n}\right)ds=0. \eqno (2.2)$$
Здесь  $$\frac{\partial }{\partial n}=\frac{dy}{\partial
s}\frac{\partial}{\partial x}-\frac{dx}{\partial
s}\frac{\partial}{\partial y} \eqno(2.3)$$ ~---~ оператор
производной по внешней нормали $n$ к кривой  $S$ и $$
\frac{dy}{ds}=cos(n,x),\,\, \frac{dx}{ds}=-cos(n,y)\eqno (2.4)$$
~---~ направляющие косинусы этой нормали.

Полагая в формуле (2.1) $v\equiv1$ и заменяя $u$ на $u^2,$ получим
$$\iint\limits_\Omega
x^{2\alpha}y^{2\beta}\left[u^2_x+u^2_y\right]dxdy=\int\limits_S
x^{2\alpha}y^{2\beta}u\frac{\partial u}{\partial n}ds, $$ где
$u(x,y)$ ~---~ решение уравнения $H^0_{\alpha,\beta}(u)=0$.

Наконец, из формулы (2.2), полагая $v\equiv1$, будем иметь
$$\int\limits_S
x^{2\alpha}y^{2\beta}\frac{\partial u}{\partial n}ds=0, \eqno
(2.5)$$ т.е. интеграл от нормальной  производной решения уравнения
$H^0_{\alpha,\beta}(u)=0$ с ве\-сом $x^{2\alpha}y^{2\beta}$ по
контуру области равен нулю.

\section{Потенциал двойного слоя  $w^{(3)}(x_0,y_0)$}.

Пусть  $\Omega$~---~ область, ограниченная отрезками $(0,a)$ и
$(0,b)$ осей $x$  и  $y$, соответственно, и кривой $\Gamma$ с
концами в точках  $A(a,0)$  и $B(0,b)$, лежащей в первой четверти
$x>0, y>0$ плоскости $R^2$.

Параметрическое уравнение кривой $\Gamma$ пусть будет $x=x(s)$ и
$y=y(s) \,(s\in[0,l]),$ где $s$~---~ длина дуги, отсчитываемая от
точки $B$. Относительно кривой  $\Gamma$ будем предполагать, что:

1) функции $x=x(s)$ и $y=y(s)$ имеют непрерывные производные
 $x'(s)$ и $y'(s)$ на отрезке $[0,l]$, не обращающиеся одновременно в нуль;
вторые производные  $x''(s)$ и $y''(s)$ удовлетворяют условию
Гельдера с показателем $\varepsilon (0<\varepsilon<1)$ на $[0,l]$,
где $l$~---~ длина кривой $\Gamma $;

2) в окрестностях точек $A(a,0)$ и $B(0,b)$ на кривой $\Gamma$
выполняются условия

$$\left| {\frac{dx}{ds}} \right|\le Cy^{1+\varepsilon }\left( s
\right),\, \left| {\frac{dy}{ds}} \right|\le Cx^{1+\varepsilon
}\left( s \right), \eqno (3.1)$$ где $C=const$. Координаты
переменной точки на кривой $\Gamma$ будем обозначать через $(
x,y).$

Рассмотрим интеграл
$$w^{(3)}(x_0,y_0)=\int\limits_0^lx^{2\alpha}y^{2\beta}\mu_3(s)\frac{\partial q_3(x,y;x_0,y_0)}{\partial n}ds, \eqno (3.2)$$
где $\mu_3(s)$~---~ непрерывная функция в промежутке $[0,l],$ а
$q_{3}(x,y;x_0,y_0)$ ~---~ фун\-да\-мен\-таль\-ное решение
уравнения $H^0_{\alpha,\beta}(u)=0,$ определенное по формуле
(1.1).

Интеграл (3.2)  будем называть \textit{третьем потенциалом
двойного слоя с плотностью}  $\mu_3(s)$. Очевидно, что $w^{(3)}(
x_0 ,y_0)$ есть регулярное решение уравнения
$H^0_{\alpha,\beta}(u)=0$ в любой области, лежащей в первой
четверти, не име\-ю\-щей общих точек ни с кривой $\Gamma$ , ни с
осью $x$ , ни с осью $y$. Как и в случае ло\-га\-риф\-ми\-ческого
потенциала, можно показать существование потенциала двойного слоя
(3.2) в точках кривой $\Gamma$ для ограниченной плотности
$\mu_3(s)$.

\begin{lemma}
Справедливы следующие формулы
$$w^{(3)}(x_0,y_0)=\left\{ \begin{matrix}
    j(x_0,y_0)-1, \,\text{если} \,\,\,(x_0,y_0)\in\Omega,   \\
    j(x_0,y_0)-\frac{1}{2},\, \text{если}\,\,\,  (x_0,y_0)\in\Gamma, \\
j(x_0,y_0),\,\,\,\,\,\,\,\,\,\,\,\,\, \text{если}\,\,\,  (x_0,y_0)\notin \overline{\Omega},    \\
\end{matrix} \right.\eqno(3.3)$$
где
\,\,\,\,$\overline{\Omega}:=\Omega\cup\Gamma$;\\

$j(x_0,y_0)=(1-2\beta)k_3y_0^{1-2\beta}\int\limits_0^ax^{2\alpha}\times$
$$\times\left((x-x_0)^2+y_0^2\right)^{-\alpha+\beta-1}F\left(1+\alpha-\beta,\alpha;2\alpha;\frac{-4xx_0}{(x-x_0)^2+y_0^2}\right)dx.
\eqno (3.4) $$ Здесь
$F(a,b;c;z)=\sum\limits_{k=0}^\infty\frac{(a)_k(b)_k}{(c)_kk!}z^k$~---~
гипергеометрическая функция Гаусса.
\end{lemma}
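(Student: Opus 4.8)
The plan is to read (3.3) as the Gauss-type formula for the third double-layer potential, that is, to compute $w^{(3)}$ for the unit density $\mu_3\equiv1$. Since $u\equiv1$ is a solution of $H^0_{\alpha,\beta}(u)=0$, I would put $u\equiv1$ and $v=q_3$ into Green's identity (2.2). The boundary $S=\partial\Omega$ is the union of the segment $OA=\{0\le x\le a,\,y=0\}$, the segment $OB=\{x=0,\,0\le y\le b\}$ and the curve $\Gamma$, so that $\int_S x^{2\alpha}y^{2\beta}\,\partial_n q_3\,ds=w^{(3)}(x_0,y_0)+I_{OA}+I_{OB}$, where $I_{OA}$ and $I_{OB}$ are the integrals over the two coordinate segments. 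The whole problem thus reduces to evaluating $I_{OA}$, $I_{OB}$ and $\int_S$ for each position of $(x_0,y_0)$.

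On $OB$ the outer normal is along $-x$, so $\partial_n q_3=-\partial_x q_3$, and the Neumann-type relation (1.4) together with the factor $x^{2\alpha}\to0$ gives $I_{OB}=0$. On $OA$ the outer normal is along $-y$; although $y^{2\beta}\to0$, the factor $y^{1-2\beta}$ in (1.1) keeps $y^{2\beta}\partial_y q_3$ bounded. Using $\eta=-4yy_0/r^2\to0$ and $F_2(a,b_1,b_2;c_1,c_2;\xi,0)=F(a,b_1;c_1;\xi)$, I would compute $\lim_{y\to0}y^{2\beta}\partial_y q_3$ and recognize the integrand of (3.4), so that $I_{OA}=-j(x_0,y_0)$ once the orientation of $S$ is accounted for. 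When $(x_0,y_0)\notin\overline{\Omega}$ the solution $q_3$ is regular on $\overline{\Omega}$, hence (2.2) yields $\int_S=0$ and $w^{(3)}=-(I_{OA}+I_{OB})=j(x_0,y_0)$, which is the third line of (3.3).

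For $(x_0,y_0)\in\Omega$ the singularity of $q_3$ lies inside $\Omega$. I would delete a small disk $B_\varepsilon(x_0,y_0)$ and apply (2.2) on $\Omega\setminus\overline{B_\varepsilon}$, where $u\equiv1$ and $q_3$ are both admissible; this gives $\int_S x^{2\alpha}y^{2\beta}\,\partial_n q_3\,ds+\int_{C_\varepsilon}x^{2\alpha}y^{2\beta}\,\partial_n q_3\,ds=0$, with $n$ the outer normal of the punctured domain (pointing into $B_\varepsilon$ along $C_\varepsilon$). Consequently $w^{(3)}(x_0,y_0)=j(x_0,y_0)-\lim_{\varepsilon\to0}\int_{C_\varepsilon}x^{2\alpha}y^{2\beta}\,\partial_n q_3\,ds$. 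For $(x_0,y_0)\in\Gamma$, since $\Gamma$ is a Lyapunov curve (assumption~1 of the setup), I would instead excise the half-disk contained in $\Omega$, and the same computation over the semicircle returns exactly one half of the interior contribution.

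The decisive step is the circle integral. As $r\to0$ both Appell arguments blow up, $\xi=-4xx_0/r^2\to-\infty$ and $\eta=-4yy_0/r^2\to-\infty$, so I need the behaviour of $F_2$ for large negative arguments. The crucial structural fact is that the parameters satisfy $a=b_1+b_2$ (indeed $1+\alpha-\beta=\alpha+(1-\beta)$): this is exactly the degenerate case in which the Euler-type double integral for $F_2$ diverges logarithmically, so the prefactor $(r^2)^{-\alpha+\beta-1}$ is absorbed and $q_3$ acquires the genuine two-dimensional interior singularity $q_3\sim\dfrac{1}{2\pi\,x_0^{2\alpha}y_0^{2\beta}}\ln\dfrac1r$ rather than a power-type one. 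Parametrizing $C_\varepsilon$ by $x=x_0+\varepsilon\cos\varphi$, $y=y_0+\varepsilon\sin\varphi$, the weight $x^{2\alpha}y^{2\beta}\to x_0^{2\alpha}y_0^{2\beta}$ cancels this coefficient, the angular integral collapses to a constant, and the value of $k_3$ in (1.2) is precisely what forces $\lim_{\varepsilon\to0}\int_{C_\varepsilon}x^{2\alpha}y^{2\beta}\,\partial_n q_3\,ds=1$; over the semicircle the same computation gives $\tfrac12$. This produces the interior value $j-1$ and the boundary value $j-\tfrac12$ in (3.3). Finally, the subleading terms vanish with $\varepsilon$ thanks to the H\"older continuity of $x''(s),y''(s)$, while the end-point estimates (3.1) guarantee the convergence of the improper integrals defining $w^{(3)}$ and $j$ near $A$ and $B$.
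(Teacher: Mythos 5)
Your skeleton coincides with the paper's: you apply Green's identity (2.2) with $u\equiv1$, $v=q_3$, split $\partial\Omega$ into $\Gamma$ and the two coordinate segments, kill the integral over $x=0$ by (1.4), identify the integral over $y=0$ with $j(x_0,y_0)$ of (3.4) via $\eta\to0$ and $F_2(\,\cdot\,;\xi,0)=F(\,\cdot\,;\xi)$ (this is exactly the paper's $J_5=j$), and excise a disk (resp.\ half-disk) around an interior (resp.\ boundary) singular point. Your structural observation that $1+\alpha-\beta=\alpha+(1-\beta)$, i.e.\ $a=b_1+b_2$, is what produces a logarithmic rather than power singularity is also right (the paper confirms it after Lemma~3 via the expansion of $F(a,b;a+b;z)$), and your constant $q_3\sim\frac{1}{2\pi x_0^{2\alpha}y_0^{2\beta}}\ln\frac{1}{r}$ is correct --- it follows from the Burchnall--Chaundy-type expansion (3.16) combined with the value (1.2) of $k_3$.

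The gap is in the decisive step, the circle integral. From the asymptotics $q_3=\frac{1}{2\pi x_0^{2\alpha}y_0^{2\beta}}\ln\frac{1}{r}+o\bigl(\ln\frac{1}{r}\bigr)$ you cannot conclude $\lim_{\varepsilon\to0}\int_{C_\varepsilon}x^{2\alpha}y^{2\beta}\,\partial_n q_3\,ds=1$: an asymptotic relation cannot be differentiated. A remainder that is $o\bigl(\ln\frac{1}{r}\bigr)$, for instance $r\sin\frac{1}{r}$, has radial derivative of exact order $\frac{1}{r}$, and its flux through $C_\varepsilon$ equals $2\pi\varepsilon\sin\frac{1}{\varepsilon}-2\pi\cos\frac{1}{\varepsilon}$, which does not even converge; so the leading term of $q_3$ alone does not determine the limit you need. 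Controlling $\nabla q_3$ itself is precisely the technical core of the lemma, and it is what the paper supplies and your plan omits: the paper differentiates $q_3$ in closed form via the differentiation formula (3.5) for $F_2$ and a contiguous relation (formulas (3.8)--(3.10)), inserts polar coordinates, applies the expansion (3.16)--(3.17) \emph{to the derivative}, and uses Gauss's theorem (3.18)--(3.19) to obtain $-(1+\alpha-\beta)k_3y_0^{1-2\beta}\lim_{\rho\to0}J_1=-1$, while the remaining pieces $J_2,J_3,J_4$ vanish because $\rho\ln\rho\to0$ (note: they vanish for that reason, not because of the H\"{o}lder continuity of $x''(s),y''(s)$, which is irrelevant on the auxiliary circle $C_\rho$). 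Your argument could be rescued, e.g., by first applying (2.2) in an annulus to show the weighted flux is independent of $\varepsilon$ and then extracting its value from the $\ln$-asymptotics by an averaging in $\rho$/integration-by-parts argument, or by exhibiting a two-term expansion of $q_3$ with a remainder whose gradient is $o(1/r)$; but some such additional ingredient is indispensable, and as written the jump from the asymptotics of $q_3$ to the value of the flux integral is unjustified.
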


\begin {proof}
\textbf{Случай 1}. Пусть точка $ (x_0,y_0)$ находится внутри
$\Omega.$ Вырежем из области $\Omega$ круг малого радиуса $\rho$ с
центром в точке  $(x_0,y_0)$  и обозначим через $\Omega_\rho$
оставшуюся часть области $\Omega$, а через $C_\rho$ окружность
вырезанного круга. В области $\Omega_\rho$  функция $
q_3(x,y;x_0,y_0)$~---~ регулярное ре\-ше\-ние уравнения
$H^0_{\alpha,\beta}(u)=0$. Используя формулу для производной
гипер\-гео\-мет\-ри\-чес\-кой функции Аппеля [12]
$$ \frac{\partial^{m+n}F_2(a;b_1,b_2;c_1,c_2;x,y)}{\partial x^m\partial y^n}=$$ $$=\frac{(a)_{m+n}(b_1)_m(b_2)_n}{(c_1)_m(c_2)_n}F_2(a+m+n;b_1+m,b_2+n;c_1+m,c_2+n;x,y) \eqno (3.5)$$
имеем
$$\frac{\partial q_3(x,y;x_0,y_0)}{\partial x}=-2(1 +\alpha-\beta){k_3}{\left( {{r^2}} \right)^{-\alpha+\beta-2}}{y^{1-2\beta}}y_0^{1-2\beta}P(x,y;x_0,y_0), \eqno (3.6)$$
где
$$P(x,y;x_0,y_0)=(x-x_0)F_2(1+\alpha-\beta;\alpha,1-\beta;2\alpha,2-2\beta;\xi,\eta)+$$ $$+x_0 F_2(2+\alpha-\beta;1+\alpha,1-\beta;1+2\alpha,2-2\beta;\xi,\eta)+ $$
$$+(x-x_0)\left[\frac{(1+\alpha-\beta)\alpha}{2\alpha}\xi F_2(2+\alpha-\beta;1+\alpha,1-\beta;1+2\alpha,2-2\beta;\xi,\eta)\right.$$
$$\left.+ \frac{1-\beta}{2-2\beta}\eta F_2(2+\alpha-\beta;\alpha,2-\beta;2\alpha,3-2\beta;\xi,\eta)\right].\eqno (3.7)$$

Далее применяя известное соотношение [5]:
$$\frac{{{b_1}}}{{{c_1}}}x{F_2}\left( {a + 1;{b_1} + 1,{b_2};{c_1} + 1,{c_2};x,y} \right) + \frac{{{b_2}}}{{{c_2}}}y{F_2}\left( {a + 1;{b_1},{b_2} + 1;{c_1},{c_2} + 1;x,y}
\right)=$$
   $$= {F_2}\left( {a + 1;{b_1},{b_2};{c_1},{c_2};x,y} \right) - {F_2}\left( {a;{b_1},{b_2};{c_1},{c_2};x,y} \right),
                         $$
к квадратной скобке в (3.7),  получим
$$
  \frac{{\partial {q_3}\left( {x,y;{x_0},{y_0}} \right)}}{{\partial x}}= -2(1 +\alpha-\beta){k_3}{\left( {{r^2}} \right)^{-\alpha+\beta-2}}{y^{1-2\beta}}y_0^{1-2\beta}\times$$
$$\times \left[ x_0{F_2}\left(2+\alpha-\beta;1+\alpha, 1-\beta; 1+2\alpha, 2-2\beta; \xi, \eta \right)+\right.$$ $$\left.+(x-x_0){F_2}\left( {2+\alpha-\beta; \alpha,1-\beta; 2\alpha,2-2\beta;\xi ,\eta }
   \right)\right]. \eqno (3.8) $$

 Аналогично находим
$$\frac{{\partial {q_3}\left(
{x,y;{x_0},{y_0}} \right)}}{{\partial y}} =-2(1+\alpha-\beta){k_3}{\left( {{r^2}}
\right)^{-\alpha+\beta-2}}{y^{1 -
2\beta}}y_0^{1-2\beta}\times$$ $$\times\left[y_0{F_2}\left({2+\alpha-\beta;\alpha,2-\beta;2\alpha,3-2\beta
;\xi ,\eta } \right)+\right.$$
$$+\left.(y-y_0){F_2}\left({2+\alpha-\beta;\alpha,1-\beta;2\alpha,2-2\beta
;\xi ,\eta } \right)\right]+$$
$$+(1 -2\beta){k_3}{\left(
{{r^2}}
\right)^{-\alpha+\beta-1}}{y^{-2\beta}}y_0^{1-2\beta}{F_2}\left(
{1+\alpha-\beta ;\alpha, 1-\beta; 2\alpha, 2-2\beta;\xi ,\eta }
\right). \eqno (3.9)$$

 Пользуясь (3.8) и (3.9), в
силу (1.1),(2.3) и (2.4), найдем
 $$\frac{{\partial {q_3}\left( {x,y;{x_0},{y_0}} \right)}}{{\partial n}}=(1+\alpha-\beta){k_3}{\left( {{r^2}} \right)^{-\alpha+\beta-2}}{y^{-2\beta}}y_0^{1-2\beta}Q\left( {x,y;{x_0},{y_0}} \right), \eqno (3.10)$$
где
$$Q\left( {x,y;{x_0},{y_0}} \right)=-r^2y{F_2}\left( {2+\alpha-\beta;\alpha ,1-\beta; 2\alpha, 2-2\beta;\xi,\eta} \right)\frac{\partial }{{\partial n}}\left[ {\ln {r^2}}\right]- $$
$$-2yy_0{F_2}\left( {2+\alpha-\beta;1+\alpha, 1-\beta; 1+2\alpha, 2-2\beta; \xi,\eta } \right)\frac{{dx}}{{ds}}+$$
$$+2x_0y{F_2}\left({2+\alpha-\beta;\alpha,2-\beta; 2\alpha,3-2\beta;\xi ,\eta } \right)\frac{{dy}}{{ds}}+$$
$$+(1-2\beta) r^2{F_2}\left( {1+\alpha-\beta; \alpha,1-\beta; 2\alpha, 2-2\beta; \xi ,\eta } \right)\frac{{dx}}{{ds}}.$$

Теперь интегрируя нормальную производную  $\frac{{\partial
}}{{\partial n}}{q_3}\left( {x,y;{x_0},{y_0}} \right)$ с весом
${x^{2\alpha }}{y^{2\beta }}$ по границе области $\Omega_\rho$, в
силу (2.5), получим

$$\int\limits_0^a {{x^{2\alpha}}{{\left. {\left[ {{y^{2\beta}}\frac{{\partial {q_3}\left( {x,y;{x_0},{y_0}} \right)}}{{\partial n}}} \right]} \right|}_{y = 0}}dx}+\int\limits_0^lx^{2\alpha}y^{2\beta}\mu_3(s)\frac{\partial q_3(x,y;x_0,y_0)}{\partial n}ds-$$
$$-\mathop {\lim }\limits_{\rho  \to 0} \int\limits_{{C_\rho }} {} {x^{2\alpha }}{y^{2\beta }}\frac{{\partial {q_3}\left( {x,y;{x_0},{y_0}} \right)}}{{\partial n}}ds-\int\limits_0^b {{x^{2\alpha}}{y^{2\beta}}{{\left. {{\frac{{\partial {q_3}\left( {x,y;{x_0},{y_0}} \right)}}{{\partial n}}} } \right|}_{x = 0}}dy}=0.$$

Далее, с учетом (3.2) и (1.4), имеем

$$w_1^{\left( 3 \right)}\left( {{x_0},{y_0}} \right) = \mathop {\lim }\limits_{\rho  \to 0} \int\limits_{{C_\rho }} {} {x^{2\alpha }}{y^{2\beta }}\frac{{\partial {q_3}\left( {x,y;{x_0},{y_0}} \right)}}{{\partial n}}ds+$$ $$+\int\limits_0^a {{x^{2\alpha}}{{\left. {\left[ {{y^{2\beta}}\frac{{\partial {q_3}\left( {x,y;{x_0},{y_0}} \right)}}{{\partial y}}} \right]} \right|}_{y = 0}}dx}.
\eqno (3.11)$$

Подставив (3.10) в (3.11), найдем
$$w_1^{(3)}\left( {{x_0},{y_0}} \right) = {k_3}y_0^{1
-2\beta}\mathop {\lim }\limits_{\rho  \to 0} \left\{
{(1+\alpha-\beta)\left[{ - {J_1} - 2{y_0}{J_2} + 2{x_0}{J_3}}
\right]
+ {J_4}} \right\} + {J_5}, \eqno (3.12)$$ где
$${J_1} = \int\limits_{{C_\rho }} {} x^{2\alpha}y
{\left( {{r^2}} \right)^{-\alpha+\beta-1}}{F_2}\left(
{2+\alpha-\beta; \alpha,1-\beta; 2\alpha,2-2\beta ;\xi ,\eta }
\right)\frac{\partial }{{\partial n}}\left[ {\ln {r^2}}
\right]ds,$$  $${J_2}=\int\limits_{{C_\rho }} {}
x^{2\alpha}y{\left( {{r^2}} \right)^{-\alpha+\beta-2}}{F_2}\left(
{2+\alpha-\beta ;1+\alpha,1-\beta; 1+2\alpha, 2-2\beta ;\xi ,\eta
} \right)\frac{{dx(s)}}{{ds}}ds,$$
$${J_3}=\int\limits_{{C_\rho }} {} x^{2\alpha}y{\left( {{r^2}} \right)^{-\alpha+\beta-2}}{F_2}\left( {2+\alpha
-\beta;\alpha,2-\beta; 2\alpha, 3-2\beta;\xi ,\eta }
\right)\frac{{dy(s)}}{{ds}}ds,$$ $$J_4 =
(1-2\beta)\int\limits_{{C_\rho }} {} x^{2\alpha}{\left( {{r^2}}
\right)^{-\alpha+\beta-1}}{F_2}\left( {1+\alpha-\beta;\alpha
,1-\beta;2\alpha ,2-2\beta ;\xi ,\eta }
\right)\frac{{dx(s)}}{{ds}}ds,$$
$${J_5}= \int\limits_0^a {{x^{2\alpha}}{{\left.
{\left[ {{y^{2\beta}}\frac{{\partial {q_3}\left( {x,y;{x_0},{y_0}}
\right)}}{{\partial y}}} \right]} \right|}_{y = 0}}dx}. $$

Вводя полярные координаты
$$x = {x_0} + \rho \cos \varphi , \,\,y = {y_0} + \rho \sin \varphi \eqno (3.13)$$
в интеграле ${J_1}$, получим
$$
  {J_1} = \int\limits_0^{2\pi } ({x_0} + \rho \cos \varphi )^{2\alpha}{{({y_0} + \rho \sin \varphi )}}\times $$ $$\times{{\left( {{\rho ^2}} \right)}^{-\alpha+\beta-1}}
  {F_2}\left( {2+\alpha-\beta; \alpha ,1-\beta ; 2\alpha ,2-2\beta ;\xi ,\eta }
  \right)d\varphi.
   \eqno      (3.14)$$

Исследуем подынтегральное выражение в (3.14). Применяя
последовательно известные формулы [13]
$${F_2}\left( {a;{b_1},{b_2};{c_1},{c_2};x,y}
\right)=$$ $$ = \sum\limits_{i = 0}^\infty  {} \frac{{{{\left( a
\right)}_i}{{\left( {{b_1}} \right)}_i}{{\left( {{b_2}}
\right)}_i}}}{{{{\left( {{c_1}} \right)}_i}{{\left( {{c_2}}
\right)}_i}i!}}{x^i}{y^i}F\left( {a + i,{b_1} + i;{c_1} + i;x}
\right)F\left( {a + i,{b_2} + i;{c_2} + i;y} \right)$$ и $$F\left(
{a,b;c,x} \right) = {\left( {1 - x} \right)^{ - b}}F\left( {c -
a,b;c,\frac{x}{{x - 1}}} \right),\eqno (3.15)$$ получим
$${F_2}\left( {a;{b_1},{b_2};{c_1},{c_2};x,y} \right) = \frac{(1 -
x)^{-b_1}}{(1-y)^{ {b_2}}}\sum\limits_{i = 0}^\infty  {}
\frac{{{{\left( a \right)}_i}{{\left( {{b_1}} \right)}_i}{{\left(
{{b_2}} \right)}_i}}}{{{{\left( {{c_1}} \right)}_i}{{\left(
{{c_2}} \right)}_i}i!}}{\left( {\frac{x}{{1 - x}}}
\right)^i}{\left( {\frac{y}{{1 - y}}} \right)^i} \times
  $$
   $$ \times F\left( {{c_1} - a,{b_1} + i;{c_1} + i;\frac{x}{{x - 1}}} \right)F\left( {{c_2} - a,{b_2} + i;{c_2} + i;\frac{y}{{y - 1}}} \right).
                         \eqno (3.16)$$
Воспользовавшись теперь формулой  (3.16) гипергеометрическую
функ\-цию Аппеля ${F_2}\left( {2+\alpha-\beta; \alpha ,1-\beta ;
2\alpha ,2-2\beta ;\xi ,\eta }
  \right)$ запишем в виде

$${F_2}\left( {2+\alpha-\beta; \alpha, 1-\beta; 2\alpha
,2-2\beta; \xi ,\eta } \right)=$$
   $$= {\left( {{\rho ^2}} \right)^{1+\alpha-\beta }}{\left( {{\rho ^2} + 4x_0^2 + 4{x_0}\rho \cos \,\varphi } \right)^{-\alpha}}{\left( {{\rho ^2} + 4y_0^2 + 4{y_0}\rho \sin \,\varphi } \right)^{\beta-1 }}{P_{11}},
 \eqno                        (3.17)$$
где $${P_{11}} = \sum\limits_{i = 0}^\infty  {} \frac{{{{\left( {2
+\alpha-\beta} \right)}_i}{{\left( {\alpha } \right)}_i}{{\left(
1-\beta  \right)}_i}}}{{{{\left( {2\alpha } \right)}_i}{{\left(
{2-2\beta } \right)}_i}i!}}\times $$ $$\times {\left(
{\frac{{4x_0^2 + 4{x_0}\rho \cos \,\varphi }}{{{\rho ^2} + 4x_0^2
+ 4{x_0}\rho \cos \,\varphi }}} \right)^i}{\left( {\frac{{4y_0^2 +
4{y_0}\rho \sin \,\varphi }}{{{\rho ^2} + 4y_0^2 + 4{y_0}\rho \sin
\,\varphi }}} \right)^i} \times $$
   $$\times F\left( { \alpha+\beta-2 , \alpha +i;2\alpha+i;\frac{{4x_0^2 + 4{x_0}\rho \cos \,\varphi }}{{{\rho ^2} + 4x_0^2 + 4{x_0}\rho \cos\varphi }}} \right)\times$$ $$\times F\left( {-\alpha-\beta,1-\beta+i;2-2\beta+i;\frac{{4y_0^2 + 4{y_0}\rho \sin\varphi }}{{{\rho ^2} + 4y_0^2 + 4{y_0}\rho \sin \,\varphi }}} \right).$$
Используя известную формулу  для $F\left( {a,b;c;1} \right)$ [14]
$$F\left( {a,b;c;1} \right) = \frac{{\Gamma \left( c \right)\Gamma
\left( {c - a - b} \right)}}{{\Gamma \left( {c - a} \right)\Gamma
\left( {c - b} \right)}},c \ne 0, - 1, - 2,...,\operatorname{Re}
\left( {c - a - b} \right) > 0, \eqno (3.18)$$ получим
$$\mathop {\lim }\limits_{\rho  \to 0}
{P_{11}} = \frac{{\Gamma (2\alpha )\Gamma (2-2\beta )}}{{\Gamma
(2+\alpha-\beta )\Gamma (1-\beta )\Gamma (\alpha )}}. \eqno
(3.19)$$

Таким образом, в силу  (3.14), (3.17) и (3.19), окончательно
получим
$$ - (1+\alpha-\beta){k_3}н_0^{1-2\beta
}\mathop {\lim }\limits_{\rho  \to 0} {J_1}
 =  - 1. \eqno (3.20)$$

Далее, учитывая, что
 $$\mathop {\lim }\limits_{\rho  \to 0} \rho \ln \rho  = 0, \eqno
           (3.21)$$
мы имеем
$$\mathop {\lim }\limits_{\rho  \to 0} {J_2} = \mathop {\lim }\limits_{\rho  \to 0} {J_3} = \mathop {\lim }\limits_{\rho  \to 0} {J_4} = 0.
 \eqno (3.22)$$

Наконец, рассмотрим интеграл ${J_5},$ который, согласно формуле
(3.9), можно привести к виду (3.4), т.е.
$${J_5}=j({x_0},{y_0}).
               \eqno (3.23)$$
Теперь, в силу (3.20)~---~(3.23), из (3.12) следует, что в точке
$\left( {{x_0},{y_0}} \right) \in \Omega $  имеет место равенство
$$w_1^{\left( 3
\right)}\left( {{x_0},{y_0}} \right) = j({x_0},{y_0}) - 1. $$

\textbf{Случай 2}. Пусть теперь точка $\left( {{x_0},{y_0}}\right)
$ совпадает с некоторой точкой $M_0,$ лежащей на кривой $\Gamma.$
Проведем окружность малого радиуса $\rho$ с центром в точке
$\left( {{x_0},{y_0}} \right).$ Эта окружность вырежет часть
${\Gamma_\rho }$   кривой $\Gamma.$ Оставшуюся часть кривой
обозначим через $\Gamma  - {\Gamma_\rho }$. Обозначим через
$C'_\rho$  часть окружности $C_\rho$ , лежащей внутри области
$\Omega $  и рассмотрим область ${\Omega _\rho },$ ограниченную
кривыми $\Gamma - {\Gamma_\rho },$ $C'_\rho$ и отрезками  $\left[
{0,a} \right]$ и $\left[ {0,b} \right]$ осей $x$ и $y$,
соответственно. Тогда имеем
 $$w_1^{\left(3\right)}\left( {{x_0},{y_0}} \right) \equiv \int\limits_0^l {{x^{2\alpha }}{y^{2\beta }}} \frac{{\partial {q_3}\left( {x,y;{x_0},{y_0}} \right)}}{{\partial
  n}}ds= $$
   $$= \mathop {\lim }\limits_{\rho  \to 0} \int\limits_{\Gamma  - {\Gamma _\rho }} {{x^{2\alpha }}{y^{2\beta }}\frac{{\partial {q_3}\left( {x,y;{x_0},{y_0}} \right)}}{{\partial n}}ds}.
\eqno                  (3.24)$$

Так как точка $\left( {{x_0},{y_0}}\right) $ лежит вне этой
области, то в этой области  функция ${q_3}\left( {x,y;{x_0},{y_0}}
\right)$ является регулярным решением уравнения ${H_{\alpha ,\beta
}^0}(u)=0 $ и в силу (2.5) верно равенство
$$
  \int\limits_{\Gamma  - {\Gamma _\rho }} {{x^{2\alpha }}{y^{2\beta }}\frac{{\partial {q_3}\left( {x,y;{x_0},{y_0}} \right)}}{{\partial n}}ds}
   = \int\limits_0^a {{x^{2\alpha}}} {\left. {\left[ {{y^{2\beta}}\frac{{\partial {q_3}\left( {x,y;{x_0},{y_0}} \right)}}{{\partial y}}} \right]} \right|_{y = 0}}dx+$$
   $$+ \int\limits_0^b{x^{2\alpha }} {{y^{2\beta }}} { \left. {\frac{{\partial {q_3}\left( {x,y;{x_0},{y_0}} \right)}}{{\partial x}}}  \right|_{x = 0}}dy + \int \limits_{C_\rho} {} {x^{2\alpha }}{y^{2\beta }}\frac{\partial }{{\partial n}}\left\{ {{q_3}\left( {x,y;{x_0},{y_0}} \right)} \right\}ds.
\eqno                  (3.25)$$

Подставляя (3.25) в (3.24), с учетом (3.23) и (1.4), получим

 $$w_1^{(3)}\left( {{x_0},{y_0}} \right) = j({x_0},{y_0}) + \mathop {\lim }\limits_{\rho  \to 0} \int\limits_{C_\rho} {} {x^{2\alpha }}{y^{2\beta }}\frac{{\partial {q_3}\left( {x,y;{x_0},{y_0}} \right)}}{{\partial n}}ds.
     $$

Вводя снова полярные координаты (3.13) с центром в точке $\left(
{{x_0},{y_0}}\right) $ в ин\-тег\-ра\-ле $$\int\limits_{C_\rho} {}
{x^{2\alpha }}{y^{2\beta }}\frac{\partial }{{\partial n}}\left\{
{{q_3}\left( {x,y;{x_0},{y_0}} \right)} \right\}ds $$ и переходя к
пределу  при ${\rho \rightarrow 0 }$, получим $$\lim \limits_{\rho
\rightarrow 0 } \int\limits_{C_\rho} {} {x^{2\alpha }}{y^{2\beta
}}\frac{\partial }{{\partial n}}\left\{ {{q_3}\left(
{x,y;{x_0},{y_0}} \right)} \right\}ds=-\frac{1}{2}.$$

Таким образом,

 $$w_1^{\left(3 \right)}\left( {{x_0},{y_0}} \right) = j({x_0},{y_0}) -
                     \frac{1}{2}.$$

\textbf{Cлучай 3}. Положим, наконец, что точка $\left(
{{x_0},{y_0}}\right) $ лежит вне области  $\Omega.$ Тогда
${q_3}\left( {x,y;{x_0},{y_0}} \right)$  есть регулярное решение
уравнения ${H_{\alpha ,\beta }^0} (u)=0$ внутри области $\Omega$ с
непрерывными производными всех порядков вплоть до контура
$\Gamma,$ и в силу (2.5)
$$
  w_1^{\left( 3 \right)}\left( {{x_0},{y_0}} \right) \equiv \int\limits_0^l {{x^{2\alpha }}{y^{2\beta }}} \frac{\partial }{{\partial n}}\left\{ {{q_3}\left( {x,y;{x_0},{y_0}} \right)}
  \right\}ds= $$
   $$= \int\limits_0^a {{x^{2\alpha}}} {\left. {\left[ {{y^{2\beta}}\frac{{\partial {q_3}\left( {x,y;{x_0},{y_0}} \right)}}{{\partial y}}} \right]} \right|_{y = 0}}dx = j({x_0},{y_0}).
$$
\end {proof}

\begin{lemma}
Справедливы следующие формулы:
$$w^{(2)}(0,y_0)=\left\{ \begin{matrix}
j(0,y_0)-1, \,\,\,\,\,\,\,\,\,\,\,\,\,\,\text{если} \,\,\,y_0 \in (0,b),  \\
j(0,y_0)-\frac{1}{2},\, \text{если}\,\,\,  y_0=0\,\,{\text{или}}\,\,y_0=b,\\
j(0,y_0),\,\,\,\,\,\,\,\,\,\,\,\,\,\,\,\,\,\,\,\, \text{если}\,\,\,  b < {y_0},    \\
\end{matrix} \right. $$
где
 $$j\left( {0,y_0} \right) = \frac{{1-2\beta}}{1+2\alpha} \left(\frac{a^2}{y_0^2+a^2}\right)^{\frac{1}{2}+\alpha}k_3F\left( \frac{1}{2}+\beta,\frac{1}{2}+\alpha;\frac{3}{2}+\alpha;
  \frac{a^2}{y_0^2+a^2}\right).\eqno (3.26)$$
\end{lemma}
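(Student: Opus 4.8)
The plan is to read the assertion as the restriction to the axis $x_0=0$ of formula (3.3) from the preceding lemma: the three branches of the piecewise expression are then inherited from (3.3), and the only genuinely new ingredient is the closed form (3.26) for $j(0,y_0)$. I would accordingly split the argument into a short geometric part and a computational part, writing $w_1^{(3)}$ for the potential (3.2) with $\mu_3\equiv1$.

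\textbf{Geometric part.} The layer in (3.2) is carried by $\Gamma$ alone, and $\Gamma$ meets the $y$-axis only at its endpoint $B$; hence $x^{2\alpha}y^{2\beta}\,\partial q_3/\partial n$ depends smoothly on the pole $(x_0,y_0)$ as long as that pole stays off $\Gamma$. In particular $w_1^{(3)}$ is continuous across the open segment $\{x_0=0,\ 0<y_0<b\}$ and across the open ray $\{x_0=0,\ y_0>b\}$. Pushing the pole onto the segment from inside $\Omega$ and using the interior branch of (3.3) gives $w_1^{(3)}(0,y_0)=j(0,y_0)-1$; pushing it onto the ray from the complement of $\overline{\Omega}$ and using the exterior branch gives $w_1^{(3)}(0,y_0)=j(0,y_0)$. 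At the two endpoints $y_0=b$ (the tip $B$ of $\Gamma$) and $y_0=0$ (the corner $O$) the two one-sided limits along the axis differ, and the value is their half-sum $j(0,y_0)-\tfrac12$, matching the middle branch of (3.3).

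\textbf{Computational part.} I would evaluate (3.4) at $x_0=0$. The argument of the Gauss function in (3.4) is then $-4xx_0\big/\!\big((x-x_0)^2+y_0^2\big)=0$, and $F(a,b;c;0)=1$, so (3.4) collapses to
$$j(0,y_0)=(1-2\beta)k_3\,y_0^{\,1-2\beta}\int_0^a x^{2\alpha}\big(x^2+y_0^2\big)^{-\alpha+\beta-1}\,dx.$$
The substitution $\tau=x^2\big/\!\big(x^2+y_0^2\big)$ sends $[0,a]$ to $[0,\tau_0]$ with $\tau_0=a^2\big/\!\big(y_0^2+a^2\big)$ and turns this into an incomplete Beta integral,
$$\int_0^a x^{2\alpha}\big(x^2+y_0^2\big)^{-\alpha+\beta-1}\,dx=\tfrac12\,y_0^{\,2\beta-1}\int_0^{\tau_0}\tau^{\alpha-\frac12}(1-\tau)^{-\beta-\frac12}\,d\tau,$$
so the prefactor $y_0^{\,1-2\beta}$ is cancelled, which is why (3.26) bears no power of $y_0$. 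It then remains to insert the standard evaluation
$$\int_0^{\tau_0}\tau^{p-1}(1-\tau)^{q-1}\,d\tau=\frac{\tau_0^{\,p}}{p}\,F\big(p,1-q;p+1;\tau_0\big)$$
with $p=\tfrac12+\alpha$, $q=\tfrac12-\beta$, so that $1-q=\tfrac12+\beta$, $p+1=\tfrac32+\alpha$ and $\tfrac{1}{2p}=\tfrac{1}{1+2\alpha}$; by the symmetry of $F$ in its first two parameters this is exactly (3.26).

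\textbf{Where the difficulty lies.} The two substitutions and the Beta-to-$F$ identity are routine, so the computational part offers little resistance. The delicate point is the behaviour at the corners $O$ and $B$: there the weight $y_0^{\,1-2\beta}$ in $q_3$ degenerates (the naive choice $y_0=0$ makes $q_3$ vanish identically), and $O$ does not even lie on $\Gamma$, so the half-value $j-\tfrac12$ cannot be quoted from (3.3) verbatim. I expect the cleanest justification to be the half-sum of the one-sided axial limits described above, and I would single out this corner analysis as the step deserving the most care.
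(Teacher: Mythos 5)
Your proposal splits into parts of very different quality, so let me separate them.

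For the branches $y_0\in(0,b)$ and $y_0>b$ your argument is correct but genuinely different from the paper's. The paper never argues by continuity: it fixes the pole at $(0,y_0)$, cuts $\Omega$ by the line $x=h$, applies the flux identity (2.5) to $q_3(\cdot\,;0,y_0)$, which is a regular solution in the truncated domain $\Omega_h$, and then computes the two surviving boundary terms explicitly: the integral over the bottom edge tends to $j(0,y_0)$ (the term $J_6$), while the integral over the segment $x=h$ tends to $-1$ (the term $J_7$), the latter via formula (3.8), the substitution $y=y_0+ht$, the Gauss summation (3.18), and the evaluation of $\int_{-\infty}^{+\infty}(1+t^2)^{-\alpha-1}dt$. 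Your shortcut --- the kernel of (3.2) is regular whenever the pole is off $\Gamma$, hence $w_1^{(3)}$ and $j$ are continuous across the open segment and the open ray, so the interior and exterior branches of (3.3) pass to the limit --- is legitimate (Lemma 1 is proved independently, so there is no circularity) and much shorter. What the paper's heavier computation buys is exactly what your shortcut forfeits: it is a template that can in principle be rerun at the exceptional points, which is what the paper means by ``the remaining cases are treated similarly.'' Your evaluation of $j(0,y_0)$ via the incomplete Beta integral is correct and is an equivalent variant of the paper's route (table integral giving (3.27), then the transformation (3.15)).

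The genuine gap is the middle branch, and it is not a small one. At $y_0=b$ the pole lies on $\Gamma$, the integral (3.2) is singular, and the claim that its value equals the half-sum of the two one-sided limits is precisely the jump relation that must be proved; nothing you have established implies it. A proof has to redo the Case~2 analysis of Lemma 1 at the endpoint $B$, using the even reflection across $x=0$ (justified by (1.4)) together with condition (3.1), which forces $\Gamma$ to meet the $y$-axis orthogonally so that $\Gamma$ and its mirror image join in a $C^1$ fashion at $B$, and then carrying out the small-arc limit. You flag this as the delicate step but supply no argument. At $y_0=0$ the half-sum recipe does not even parse: the origin is approached from one side only, and since $O\notin\Gamma$ the kernel is regular there; moreover $q_3(x,y;0,0)\equiv 0$ because of the factor $y_0^{1-2\beta}$ in (1.1), so the integral (3.2) literally vanishes at the origin, whereas the asserted value is $j(0,0)-\tfrac12=\tfrac12$. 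Consequently no continuity or averaging argument along the axis can ever produce the middle branch at $y_0=0$; whatever meaning is attached to it requires a separate limiting analysis (the paper itself only gestures at ``similar reasoning'' here). So your proposal establishes the first and third branches and formula (3.26), but leaves the second branch unproved, and the method you propose for it would fail at one of the two points it is supposed to cover.
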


\begin {proof} Сначала исследуем функцию $j(x_0, y_0),$ определенную
фор\-му\-лой (3.4), при $x_0=0$:
$$j(0,y_0)=(1-2\beta)k_3y_0^{1-2\beta} \int\limits_0^ax^{2\alpha}\left(x^2+y_0^2\right)^{-\alpha+\beta-1}dx. $$

Используя известную формулу [14]
$$\int\limits_0^ax^{\lambda-1}\left(x^2+b^2\right)^\nu dx=\frac{1}{\lambda}b^{2\nu}a^{\lambda} F\left(-\nu,\frac{\lambda}{2},\frac{\lambda+2}{2};\frac{-a^2}{b^2}\right), \,\,\,(ab>0,\lambda>0),$$
получим
$$j(0,y_0)=(1-2\beta)k_3a^{1+2\beta}y_0^{-1-2\beta} F\left(\alpha-\beta+1,\frac{1}{2}+\alpha;\frac{3}{2}+\alpha;\frac{-a^2}{y_0^2}\right). \eqno (3.27) $$

Далее, воспользовавшись формулой (3.15) получим  функцию
$j(0,y_0),$ оп\-ре\-де\-лен\-ную формулой (3.26). Учитывая
известную формулу (3.18) для $F(a,b;c;1)$ и значение $k_3$  из
формулы (1.2), из (3.26) легко следует, что $j(0,0)=1.$

Пусть теперь точка $(x_0,y_0)$ находится на оси $y$  и пусть в
первом случае будет $y_0\in(0,b)$. Проведем прямую $x=h$
($h>0$~---~ достаточно мало) и рассмотрим область $\Omega_h$,
которая есть часть области $\Omega$, лежащая справа от прямой
$x=h$. Применяя формулу (2.5), получим
               $$w_1^{\left(
3\right)}\left( 0,y_0 \right) = J_6+J_7, \eqno (3.28) $$
где
 $$J_6=\mathop {\lim }\limits_{h \to 0}
\int\limits_h^a {} {\left. {{x^{2\alpha }}{y^{2\beta
}}\frac{{\partial {q_3}\left( {x,y;0,y_0} \right)}}{{\partial y}}}
\right|_{y = 0}}dx,$$
$$ J_7= \mathop {\lim }\limits_{h \to 0}
\int\limits_0^{{y_1}} {} {\left. {{y^{2\beta}}{x^{2\alpha
}}\frac{{\partial {q_3}\left( {x,y;0,y_0} \right)}}{{\partial x}}}
\right|_{x = h}}dx.$$ Здесь $y_1$~---~ ордината точки пересечения
кривой $\Gamma$ с прямой $x=h$.

Нетрудно заметить, что
  $$ J_6=j(0,y_0). \eqno (3.29)$$
Теперь рассмотрим второе слагаемое в (3.28), которое, в силу (3.8), принимает вид
$$J_7=-2(1-\alpha-\beta){k_3}y_0^{1-2\beta}J_8, \eqno (3.30)$$
где
 $$J_8=h^{1+2\alpha}\int\limits_0^{{y_1}} {} y\frac{{F\left( 2+ \alpha-\beta,1-\beta;2-2\beta;-\frac{4yy_0}{{(y-y_0)^2 + {h^2}}} \right)}}{{{{\left[ {{{\left( {y - {y_0}} \right)}^2} + {h^2}} \right]}^{2 +\alpha-\beta
}}}}dy. $$

Преобразуем $J_8$. Воспользовавшись формулой (3.15),
получим
          $$J_8=h^{1+2\alpha}\int\limits_0^{{y_1}} {} y\frac{{F\left( {-\alpha-\beta,1-\beta; 2-2\beta;\frac{{4y{y_0}}}{{{{\left( {y + {y_0}} \right)}^2} + {h^2}}}} \right)}}{{{{\left[ {{{\left( {y - {y_0}} \right)}^2} + {h^2}} \right]}^{1 + \alpha }}{{\left[ {{{\left( {y + {y_0}} \right)}^2} + {h^2}} \right]}^{1 - \beta
}}}}dx,$$ Теперь вместо $y$  введем новую переменную
интегрирования $y=y_0+ht.$ Со\-вер\-шая замену переменных, получим
              $$J_8(h,y_0)= \int\limits_{l_1}^{l_2}\left( {{y_0} + ht} \right)\frac{{F\left( {-\alpha-\beta,1-\beta; 2-2\beta,\frac{{4{y_0}\left( {{y_0} + ht} \right)}}{{{{\left( {2{y_0} + ht} \right)}^2} + {h^2}}}} \right)}}{{{{\left( {1 + {t^2}} \right)}^{\alpha+1}}{{\left[ {{{\left( {2{y_0} + ht} \right)}^2} + {h^2}} \right]}^{1-\beta}}}}dt,
\eqno  (3.31)$$ где
    $${l_1} =  - \frac{{{y_0}}}{h},      {l_2} = \frac{{{y_1} - {y_0}}}{h}.$$
Принимая во внимание, что
$$\mathop {\lim }\limits_{h\to0}F\left({-\alpha-\beta,1-\beta;2-2\beta,\frac{{4{y_0}\left( {{y_0} + ht} \right)}}{{{{\left( {2{y_0} + ht} \right)}^2} + {h^2}}}} \right)= $$ $$= F\left( {-\alpha-\beta,1-\beta;2-2\beta;1} \right)=\frac{{\Gamma \left( {2-2\beta} \right)\Gamma \left( {1 + \alpha} \right)}}{{\Gamma \left( {2+\alpha-\beta} \right)\Gamma \left({1-\beta}
\right)}}$$ и
$$\int\limits_{-\infty}^{+\infty} {} \frac{{dt}}{{{{\left( {1 + {t^2}} \right)}^{\alpha+1}}}} = \frac{{\pi \Gamma ( {2\alpha} )}}{{{2^{2\alpha-1}}\alpha{\Gamma^2}(\alpha )}},$$
из (3.29)~---~(3.31) находим
$$w_1^{( 3)}\left( 0,y_0 \right) = j(0,y_0) - 1. $$

Остальные три случая, когда $y_0=0,$ $y_0=b$ и $y_0>b$,
доказываются ана\-ло\-гич\-но первому случаю.

\end {proof}
\begin{lemma} Для любых точек  $(x,y)$ и $(x_0,y_0)\in R_+^2$  при $x\neq x_0$ и $y\neq y_0$ спра\-вед\-ли\-во
не\-ра\-венство
        $$\left| {{q_3}\left( {x,y;{x_0},{y_0}} \right)} \right| \leqslant \frac{{\Gamma(\alpha)\Gamma(1-\beta)}}{{\pi \Gamma(1+\alpha-\beta)}}\frac{{{4^{\alpha-\beta}}{y^{1-2\beta}}y_0^{1 - 2\beta}}}{{{{\left( {r_1^2} \right)}^{\alpha}}{{\left( {r_2^2} \right)}^{1-\beta} }}}\times$$ $$\times F\left[ {\alpha,1-\beta;1+\alpha-\beta;\left( {1 - \frac{{{r^2}}}{{r_1^2}}} \right)\left( {1 - \frac{{{r^2}}}{{r_2^2}}} \right)} \right],
    \eqno (3.32)$$
где $\alpha$  и $\beta$ ~---~ действительные числа, причем
$0<2\alpha,2\beta<1$,  а  $r$, $r_1$ и $r_2 $~---~
вы\-ра\-же\-ния, определенные в (1.3).
\end{lemma}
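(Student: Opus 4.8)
The plan is to collapse the Appell function $F_2$ in (1.1) into a single Gauss function of a product argument, which is exactly the $F$ appearing on the right of (3.32). Write $z_1=1-r^2/r_1^2$ and $z_2=1-r^2/r_2^2$; by (1.3) both lie in $(0,1)$ whenever $x,y,x_0,y_0>0$. I would apply the expansion (3.16) already used in the proof of Lemma~1, now with $a=1+\alpha-\beta$, $b_1=\alpha$, $b_2=1-\beta$, $c_1=2\alpha$, $c_2=2-2\beta$ and $x=\xi$, $y=\eta$. Since $\xi/(\xi-1)=z_1$ and $\eta/(\eta-1)=z_2$ by (1.3), this turns $F_2(\ldots;\xi,\eta)$ into $(1-\xi)^{-\alpha}(1-\eta)^{-(1-\beta)}$ times a series in $i$ whose $i$-th term carries a factor $(z_1z_2)^i$ and the product of the two Gauss functions $F(\alpha+\beta-1,\alpha+i;2\alpha+i;z_1)$ and $F(1-\alpha-\beta,1-\beta+i;2-2\beta+i;z_2)$.

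The next step is bookkeeping of the powers of $r^2$. Because $1-\xi=r_1^2/r^2$ and $1-\eta=r_2^2/r^2$, the prefactor $(r^2)^{-\alpha+\beta-1}$ of (1.1) times $(1-\xi)^{-\alpha}(1-\eta)^{-(1-\beta)}=(r^2/r_1^2)^{\alpha}(r^2/r_2^2)^{1-\beta}$ leaves $(r^2)^0=1$ against $(r_1^2)^{-\alpha}(r_2^2)^{-(1-\beta)}$. Rewriting the constant (1.2) as $k_3=\frac{4^{\alpha-\beta}}{\pi}\frac{\Gamma(\alpha)\Gamma(1-\beta)\Gamma(1+\alpha-\beta)}{\Gamma(2\alpha)\Gamma(2-2\beta)}$, the scalar factor in front of the series is exactly the prefactor of (3.32), up to the constant $\frac{\Gamma(2\alpha)\Gamma(2-2\beta)}{\Gamma(1+\alpha-\beta)^2}$ that will emerge in the summation.

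It then remains to bound the series. I would majorize each Gauss factor by its value at the endpoint $z=1$, evaluated in closed form by Gauss's theorem (3.18); the required conditions $\operatorname{Re}(c-a-b)>0$ hold, the values being $1-\beta$ and $\alpha$. Substituting $F(\alpha+\beta-1,\alpha+i;2\alpha+i;1)=\frac{\Gamma(2\alpha+i)\Gamma(1-\beta)}{\Gamma(1+\alpha-\beta+i)\Gamma(\alpha)}$ and $F(1-\alpha-\beta,1-\beta+i;2-2\beta+i;1)=\frac{\Gamma(2-2\beta+i)\Gamma(\alpha)}{\Gamma(1+\alpha-\beta+i)\Gamma(1-\beta)}$ into the $i$-th coefficient, the ratios of Gamma functions telescope, and the majorant series reduces to
\[
\frac{\Gamma(2\alpha)\Gamma(2-2\beta)}{\Gamma(1+\alpha-\beta)^2}\sum_{i=0}^{\infty}\frac{(\alpha)_i(1-\beta)_i}{(1+\alpha-\beta)_i\,i!}(z_1z_2)^i=\frac{\Gamma(2\alpha)\Gamma(2-2\beta)}{\Gamma(1+\alpha-\beta)^2}\,F\left[\alpha,1-\beta;1+\alpha-\beta;z_1z_2\right].
\]
Multiplying back by the prefactor would produce precisely the right-hand side of (3.32).

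The main obstacle is the termwise estimate in the last step. The two Gauss factors are not monotone in the same direction on $(0,1)$: the factor $F(1-\alpha-\beta,1-\beta+i;2-2\beta+i;z_2)$ has positive leading parameter and increases, so it is genuinely dominated by its value at $1$; but $F(\alpha+\beta-1,\alpha+i;2\alpha+i;z_1)$ has negative leading parameter $\alpha+\beta-1<0$, hence decreases and attains its maximum over $[0,1]$ at $z_1=0$, not at $z_1=1$. Thus the ``bounded by the value at $1$'' estimate cannot be applied to this factor directly, and controlling the product is the delicate point. I would try to repair this by first normalizing the decreasing factor to increasing form through a Pfaff--Euler transformation of the type (3.15); the price is extra powers of $1-z_1=r^2/r_1^2$ and $1-z_2=r^2/r_2^2$, and the real difficulty is to arrange the normalization so that those extra powers neither spoil the cancellation of $r^2$ nor disturb the constant in (3.32). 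This reconciliation is where I expect the bulk of the work to lie.
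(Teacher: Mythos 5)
Up to its final paragraph, your proposal coincides with the paper's own proof: the paper applies the expansion (3.16) with exactly your choice of parameters to obtain its formula (3.33),
$$q_3=k_3\,y^{1-2\beta}y_0^{1-2\beta}\left(r_1^2\right)^{-\alpha}\left(r_2^2\right)^{\beta-1}\sum_{i=0}^{\infty}\frac{(1+\alpha-\beta)_i(\alpha)_i(1-\beta)_i}{(2\alpha)_i(2-2\beta)_i\,i!}\,(z_1z_2)^i\,A_i(z_1)\,B_i(z_2),$$
where $A_i(z)=F(\alpha+\beta-1,\alpha+i;2\alpha+i;z)$ and $B_i(z)=F(1-\alpha-\beta,1-\beta+i;2-2\beta+i;z)$, and it then finishes by precisely the step you describe: it asserts the termwise bounds $A_i(z_1)\leq A_i(1)$ and $B_i(z_2)\leq B_i(1)$, evaluates the values at $1$ by Gauss's theorem (3.18), and resums. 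Your bookkeeping of $k_3$, of the powers of $r^2$, and of the two Gauss values is identical to the paper's.

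The obstacle you refuse to pass over is, however, a genuine error in that argument, not a defect of your exposition. Since $-1<\alpha+\beta-1<0$, one has $(\alpha+\beta-1)_k<0$ for every $k\geq1$, so $A_i$ is strictly decreasing on $[0,1]$ and in fact $A_i(z_1)\geq A_i(1)$: the paper's first displayed inequality in this proof is the reverse of the truth, which is exactly your objection. Worse, the Pfaff--Euler repair you hope for cannot exist, because inequality (3.32) is itself false with the stated constant. Take $z_1\to0$ and $z_2\to1$, which is attainable inside $R_+^2$ (for instance $y=y_0=1$, $x_0=\varepsilon$, $x=\varepsilon^2$, $\varepsilon\to0$): the series above tends to its $i=0$ term $B_0(1)=\frac{\Gamma(2-2\beta)\Gamma(\alpha)}{\Gamma(1+\alpha-\beta)\Gamma(1-\beta)}$, whereas the majorant claimed in (3.32) tends to $\frac{\Gamma(2\alpha)\Gamma(2-2\beta)}{\Gamma^2(1+\alpha-\beta)}$; for $\alpha=\beta=\frac{1}{4}$ these equal approximately $2.62$ and $\frac{\pi}{2}\approx 1.57$, so (3.32) fails there. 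What your two correct one-sided estimates, $A_i(z_1)\leq A_i(0)=1$ and $B_i(z_2)\leq B_i(1)$, actually yield is the weaker bound
$$\left|q_3\right|\leq\frac{4^{\alpha-\beta}\,\Gamma^2(\alpha)}{\pi\,\Gamma(2\alpha)}\,\frac{y^{1-2\beta}y_0^{1-2\beta}}{\left(r_1^2\right)^{\alpha}\left(r_2^2\right)^{1-\beta}}\,F\left(\alpha,1-\beta;2\alpha;z_1z_2\right),$$
whose right-hand side blows up like a negative power of $r^2$ rather than logarithmically as $r\to0$. In short: your approach is the paper's approach, the gap you flagged is real and is present in the published proof itself, and it cannot be closed as stated --- the lemma requires a larger constant or a different form of majorant.
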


\begin {proof} Из (3.16) следует, что
 $${q_3}\left( {x,y;{x_0},{y_0}} \right)={k_3}{y^{1-2\beta}}y_0^{1-2\beta}{\left( {r_1^2} \right)^{-\alpha}}{\left( {r_2^2} \right)^{\beta-1 }}\times$$ $$\times \sum\limits_{i = 0}^\infty  {} \frac{{{{\left( {1 + \alpha-\beta } \right)}_i}{{\left( {\alpha } \right)}_i}{{\left(1-\beta \right)}_i}}}{{{{\left( {2\alpha} \right)}_i}{{\left( {2-2\beta } \right)}_i}i!}}{\left( {1 - \frac{{{r^2}}}{{r_1^2}}} \right)^i}{\left( {1 - \frac{{{r^2}}}{{r_2^2}}} \right)^i} \times
  $$
   $$\times F\left( {\alpha+\beta-1 ,\alpha+i;2\alpha+i;1-\frac{{{r^2}}}{{r_1^2}}} \right)\times$$ $$\times F\left({1-\alpha-\beta,1-\beta+i;2-2\beta+i;1-\frac{{{r^2}}}{{r_2^2}}} \right),
\eqno (3.33)$$ Теперь, ввиду следующих неравенств:

$$F\left({\alpha+\beta-1,
\alpha+i;2-2\alpha+i;1-\frac{{{r^2}}}{{r_1^2}}} \right) \leqslant
\frac{{{{(2\alpha)}_i}\Gamma (2\alpha)\Gamma (1-\beta
)}}{{{{(1+\alpha-\beta)}_i}\Gamma(1+\alpha-\beta)\Gamma(\alpha)}}$$
и $$F\left( {1-\alpha-\beta,1-\beta+i;2-2\beta+i;1-
\frac{{{r^2}}}{{r_2^2}}} \right) \leqslant \frac{{{{(2-2\beta
)}_i}\Gamma (2-2\beta)\Gamma
(\alpha)}}{{{{(1+\alpha-\beta)}_i}\Gamma(1+\alpha-\beta)\Gamma
(1-\beta)}},$$ из (3.33)  следует неравенство (3.32).
\end {proof}

В силу известной формулы [6] $$F\left( {a,b;a + b;z} \right) = -
\frac{{\Gamma \left( {a + b} \right)}}{{\Gamma \left( a
\right)\Gamma \left( b \right)}}F\left( {a,b;1;1 - z} \right)\ln
\left( {1 - z} \right) + $$ $$ + \frac{{\Gamma \left( {a + b}
\right)}}{{{\Gamma ^2}\left( a \right){\Gamma ^2}\left( b
\right)}}\sum\limits_{j = 0}^\infty  {} \frac{{\Gamma \left( {a +
j} \right)\Gamma \left( {b + j} \right)}}{{{{\left( {j!}
\right)}^2}}}\left[ {2\psi \left( {1 + j} \right) - \psi \left( {a
+ j} \right) - \psi \left( {b + j} \right)} \right]{\left( {1 - z}
\right)^j},$$ $\left( { - \pi  < \arg \,\left( {1 - z} \right) <
\pi ,\,\,a,b \ne 0, - 1, - 2,...} \right)$, из  (3.32) следует
[4], что  функция $ {q_3}\left( {x,y;{x_0},{y_0}} \right)$  имеет
логарифмическую особенность при $r=0$.

\begin{lemma} Если кривая $ \Gamma$ удовлетворяет перечисленным выше
условиям, то $$\int\limits_\Gamma  {} {x^{2\alpha }}{y^{2\beta
}}\left| {\frac{{\partial {q_3}\left( {x,y;{x_0},{y_0}}
\right)}}{{\partial n}}} \right|ds \leqslant {C_1}, \eqno (3.34)$$
где $C_1$ ~---~ постоянная.
\end{lemma}
\begin {proof} Неравенство (3.34) следует из условий (3.1) и формулы
(3.10).
\end {proof}
Формулы (3.3) показывают, что при $\mu_3(s)\equiv 1$  потенциал
двойного слоя ис\-пы\-ты\-вает разрыв непрерывности, когда точка
$(x,y)$ пересекает кривую $\Gamma$. В случае произвольной
непрерывной плотности $\mu_3(s)$ имеет место

\begin{theorem}
Потенциал двойного слоя  $w^{(3)}(x_0,y_0)$ имеет пределы при
стремлении точки $(x_0,y_0)$ к точке $(x(s),y(s))$ кривой $\Gamma$
извне или изнутри. Если предел значений $w^{(3)}_i(x_0,y_0)$
изнутри обозначить через $w^{(3)}(s)$, а предел извне ~---~ через
$w^{(3)}_e(s)$, то имеют место формулы
 $$w_i^{(3)}(t) =-\frac{1}{2}{\mu
_3}\left( t \right) + \int\limits_0^l {} {\mu _3}\left( s
\right){K_3}\left( {s,t} \right)ds $$ и $$w_e^{(3 )}(t) =
\frac{1}{2}{\mu _3}(t) + \int\limits_0^l {\mu _3}(s){K_3}(s,t)ds,
$$ где
$${K_3}(s,t) = {[x(s)]^{2\alpha }}{[y(s)]^{2\beta
}}\frac{\partial }{{\partial n}}\left\{ {{q_3}\left[ {x\left( s
\right),y\left( s \right);{x_0}(t),{y_0}(t)} \right]} \right\}.$$
\end{theorem}

\begin {proof} Справедливость утверждений теоремы 1 следует из лемм 1~---~4.
\end {proof}
Функция
$$w_0^{(3)}(s) = \int\limits_0^l {} {\mu _3}(t){K_3}(s,t)dt$$ непрерывна при $0\leq s \leq l$ , что следует из хода доказательства теоремы 1. В силу результатов теоремы 1 и непрерывности  функций $w_0^3(s)$  и $\mu_3(s)$  при  $0\leq s \leq l$, следует, что  потенциал двойного слоя $w^{(3)}(x_0,y_0)$  есть функция непрерывная внутри области $\Omega$  вплоть до кривой
$\Gamma.$ Точно также $w^{(3)}(x_0,y_0)$  непрерывна вне области
$D$ вплоть до кривой $\Gamma$.

В заключении отметим, что полученные в настоящем сообщении
результаты играют важную роль при ре\-ше\-нии краевых задач для
уравнения ${H_{\alpha,\beta}^0}(u)=0$. При этом решение
пос\-тав\-лен\-ной задачи ищется в виде третьего потенциала
двойного слоя (3.2) с неизвестной плотностью $\mu_3(s)$, для
определения которой используется известная теория интегральных
уравнений Фредгольма второго рода.

\bigskip

\end{document}